\documentclass[12pt]{article}
\usepackage[margin=1.25in]{geometry}
\usepackage{pgfplots}
\usepackage{subfig}
\usepackage{amsmath}
\usepackage{amssymb}
\usepackage{algorithmic}
\usepackage{algorithm}
\usepackage{mathtools}
\usepackage{color}
\usepackage[normalem]{ulem}
\usepackage{setspace}
\usepackage{tikz}
\usetikzlibrary{arrows,
	arrows.meta,
	bending,calc,patterns,angles,quotes}
\usepackage{float}
\usepackage{hyperref}

\makeatletter
\providecommand*{\cupdot}{%
	\mathbin{%
		\mathpalette\@cupdot{}%
	}%
}
\newcommand*{\@cupdot}[2]{%
	\ooalign{%
		$\m@th#1\cup$\cr
		\hidewidth$\m@th#1\cdot$\hidewidth
	}%
}
\makeatother

\newtheorem{theorem}{Theorem}

\newtheorem{lemma}{Lemma}

\newtheorem{remark}{Remark}

\def\endpf{{\ \hfill\hbox{\vrule width1.0ex height1.0ex}\parfillskip 0pt
	}}
\newenvironment{proof}{\noindent{\bf Proof:}}{\endpf}
\newcounter{figurecounter}
\begin{document}
\setlength{\baselineskip}{20pt}

\title{Nonparametric inference from possibly unstable M/G/1 workload observations}

\author{Royi Jacobovic\footnote{School of Mathematical Sciences, Tel-Aviv University, Tel-Aviv, Israel, 6997800, E-mail: royijacobo@tauex.tau.ac.il.} \footnote{Jacobovic acknowledges the support of the Israel Science Foundation, Grant \#3739/24.} \and Binyamin Kobzantsev \footnote{School of Mathematical Sciences, Tel-Aviv University, Tel-Aviv, Israel, 6997800, E-mail: kobzantsev1@mail.tau.ac.il}}
\maketitle


\begin{abstract} Hansen and Pitts (2006) introduced the problem of nonparametric estimation of the service-time distribution of an M/G/1 queue observed through its workload process at discrete times $t=0,1,\ldots,n$. Despite its seemingly simple formulation, obtaining an estimator with sharp risk guarantees for this observation model has remained an open challenge for nearly two decades. 

In this paper, we construct such an estimator and prove that its $L^1$-risk is $\mathcal{O}\left(\frac{\log n}{\sqrt{n}}\right)$ as $n\to\infty$. Remarkably, this nearly parametric convergence rate is achieved without assuming stationarity, stability, or knowledge of the arrival rate. 

Our approach is based on a two-stage screening procedure that uncovers a hidden conditionally independent compound Poisson structure within the dependent workload observations. This probabilistic reduction transforms the original estimation problem into a classical decompounding problem, making it possible to leverage existing nonparametric estimation techniques despite the complex dependence induced by the reflected workload process. 

More broadly, we hope that the proposed screening methodology will provide a useful framework for statistical inference from dependent stochastic systems beyond the classical assumption of stability.\end{abstract}

Nonparametric estimation, Screening, Decompounding, Coupling, M/G/1 queue, Skorokhod reflection.
\smallskip

\noindent\textbf{MSC Classification:} 62G20 $\cdot$  62M20 $\cdot$ 60K25 

\section{Introduction} 
Recent advances in data collection technologies have generated a growing interest in statistical inference for queueing systems. Modern service systems routinely produce large amounts of operational data, creating new opportunities to estimate system characteristics directly from observations. However, the development of reliable inference procedures remains challenging due to the complex dependence structures inherent in queueing dynamics.

In this paper, we revisit one of the long-standing open problems in statistical inference for queueing systems, introduced by Hansen and Pitts \cite{Hansen2006}. They considered a statistician who observes the workload process of an M/G/1 queue at equally spaced time instants ($t=0,1,\ldots,n$) and aims to estimate the unknown service-time distribution $B(\cdot)$ at a prescribed point $w>0$. Actually, despite the apparent simplicity of this observation model, obtaining an estimator with sharp risk guarantees, has remained an open problem for nearly two decades.

Namely, the difficulty of solving this problem originates from the hidden dynamics between consecutive observation epochs. The arrival process, service completions, and possible visits of the workload process to zero are all unobserved. Consequently, the workload increments exhibit a complex dependence structure induced by the reflection mechanism and do not directly reveal the probabilistic characteristics of the underlying input process. This dependence has been the main obstacle to achieving sharp nonparametric estimation guarantees in this setting.

In this work, we resolve this problem by constructing a fully data-driven estimator $B_n(w)$ satisfying
\[ \mathbb E|B_n(w)-B(w)| = \mathcal O\!\left( \frac{\log n}{\sqrt n} \right), \qquad n\to\infty. \]
Remarkably, this nearly parametric convergence rate is achieved without assuming stationarity, stability, or knowledge of the arrival rate, requiring only mild smoothness assumptions on the service-time distribution $B(\cdot)$. This result provides a rare example of a statistical procedure for a \textit{possibly unstable} queueing system with rigorous nonparametric risk guarantees. We hope that it will motivate further research on inference procedures that do not rely on the classical assumption of stability.

The main methodological contribution of this paper is a two-stage screening methodology that reveals a hidden conditionally independent structure within the dependent workload observations. Rather than estimating directly from the original trajectory, we construct a data-driven screening procedure that extracts a subset of workload increments coinciding with increments of the underlying compound Poisson input process. The retained observations form a conditionally independent compound Poisson sample, thereby reducing the original inference problem to a classical decompounding problem. This probabilistic reduction allows us to define $B_n(w)$ by combining the proposed screening procedure with the Laplace-transform methodology of Den Boer and Mandjes \cite{Den Boer2017}. 

Beyond the specific queueing application, the methodology developed here suggests a broader probabilistic principle: strongly dependent observations may conceal a hidden conditionally independent structures that can be recovered through an appropriate screening procedure. In the present setting, this principle transforms inference from a reflected workload process into a classical decompounding problem. We expect that this perspective may prove useful in other stochastic systems where dependence arises through reflection, regeneration, or related mechanisms.\newline\newline
\noindent\textbf{Related literature.}
The central methodological contribution of the present paper is a probabilistic
reduction that converts inference from dependent workload observations into a
classical nonparametric decompounding problem. The existing literature relevant
to this contribution naturally falls into four directions: statistical inference
from M/G/1 workload observations, nonparametric decompounding, transform-based
inference for queueing models, and classical two-stage statistical procedures.

The estimation problem considered here was first formulated by Hansen and Pitts
\cite{Hansen2006}, who proposed an estimator based on the
Pollaczek--Khinchine transform identity together with empirical Laplace
transforms. Their pioneering work established that, under the stability
assumption, the service-time distribution can in principle be recovered from
discrete observations of the workload process, thereby initiating the study of
nonparametric inference from M/G/1 workload data.

Despite this important breakthrough, the estimation problem remains
considerably more challenging than classical transform-based inference
problems. The Skorokhod reflection mechanism induces strong temporal
dependence in the workload observations, so that the observed increments no
longer coincide with those of the underlying compound Poisson input process.
Consequently, standard decompounding techniques cannot be applied directly.
Moreover, the Hansen--Pitts framework is restricted to stable queueing systems,
whereas the present paper allows both stable and unstable M/G/1 queues.
Finally, although Hansen and Pitts demonstrated the feasibility of statistical
recovery, no estimator achieving a nearly parametric convergence rate has
previously been established for this observation scheme.
A complementary line of research was recently initiated by Ravner
\cite{Ravner2026}, who considers Poisson sampling of the workload process.
That framework relies on substantially stronger assumptions than those adopted
here, including stationarity of the workload process, knowledge of the arrival
rate, the strengthened stability condition
\[
\lambda\int_0^\infty x\,{\rm d}B(x)<1-\delta,
\]
for some $\delta\in(0,1/2)$, together with additional smoothness and moment
assumptions on the service-time distribution. Under these hypotheses, Ravner
obtains non-asymptotic risk bounds by combining Fourier inversion techniques
with structural results developed in Ravner, Boxma and Mandjes
\cite{Ravner2019}. The resulting convergence rate is of order
\[
n^{-\eta/(\eta+1)},
\]
where $\eta>0$ denotes the smoothness of the service-time distribution.

Our screening construction is conceptually related to an idea appearing in
Ravner, Boxma and Mandjes \cite[Section~5]{Ravner2019}, where screening is
employed in estimating the Lévy exponent of a spectrally positive Lévy-driven
queue observed through Poisson probing. Despite this similarity, the role of
screening is fundamentally different. In Ravner et al., screening serves
primarily to identify observations with favorable statistical properties,
thereby improving the stability of the estimation procedure. In contrast, the
proposed two-stage screening procedure performs a probabilistic reduction. It
identifies observation intervals during which the Skorokhod reflection is
inactive, thereby reducing the original dependent-data problem to inference
from an embedded conditionally independent compound Poisson sample.

Once this reduction has been achieved, the estimation problem falls within the
well-established framework of nonparametric decompounding. This allows us to
combine the proposed screening procedure with the Laplace-transform methodology
developed by Den Boer and Mandjes \cite{Den Boer2017}. Consequently, the
resulting estimator inherits the statistical properties of the underlying
decompounding procedure while requiring neither stationarity, stability, nor
knowledge of the arrival rate.

Nonparametric decompounding itself has become a mature area of statistical
inference. Important contributions include Buchmann and Grübel
\cite{Buchmann2003}, Van Es, Gugushvili and Spreij \cite{Van Es2007},
Gugushvili \cite{Gugushvili2012}, and Den Boer and Mandjes
\cite{Den Boer2017}. The present work establishes a direct connection between
this literature and statistical inference for reflected queueing systems by
showing that an appropriate screening procedure reduces the M/G/1 workload
observation problem to a classical decompounding problem.

Transform-based statistical methods have also proved highly successful in
queueing models possessing stronger independence properties. Representative
examples include inference for the M/G/$\infty$ queue by Pickands and Stine
\cite{Pickands1997}, Bingham and Pitts \cite{Bingham1999}, Blanghaps, Nov and
Weiss \cite{Blanghaps2013}, Goldenshluger
\cite{Goldenshluger2016,Goldenshluger2018}, and Goldenshluger and Jacobovic
\cite{Goldenshluger2024}. In these models, inference relies on covariance
structures or transform identities that are considerably easier to exploit than
those available for the reflected workload process of the M/G/1 queue.

The proposed two-stage screening procedure also bears a conceptual resemblance
to classical two-stage sampling procedures in mathematical statistics.
Stein's pioneering work \cite{Stein1945}, together with the ranking-and-selection
procedures of Dudewicz and Dalal \cite{Dudewicz1975} and Rinott
\cite{Rinott1978}, uses an initial sample to determine the size of a subsequent
sample so as to achieve a prescribed estimation accuracy. For a modern
perspective, see Jacobovic and Zuk \cite{Jacobovic2017}. Although motivated by
a fundamentally different estimation problem, the statistical role of the first
stage is conceptually similar. Here, the first-stage observations are not used
to estimate the service-time distribution directly, but rather to determine how
many observations may be safely retained from the second stage while preserving
the conditional independence structure required for the subsequent
decompounding step.

General references on statistical inference and inverse problems in queueing
systems include the surveys of Asanjarani, Nazarathy and Taylor
\cite{Asanjarani2021} and Baccelli, Kauffmann and Veitch
\cite{Baccelli2009}.

Viewed from this perspective, the present work combines ideas from queueing
inference, nonparametric decompounding, and classical two-stage statistical
procedures within a unified probabilistic framework.

\medskip

\noindent\textbf{Organization of the paper.} The remainder of the paper is organized as follows. In Section~\ref{sec: problem}, we introduce the M/G/1 workload model together with the associated nonparametric estimation problem. Section~\ref{sec: construction} develops the proposed two-stage screening procedure and explains how it reduces inference from the dependent workload observations to a classical decompounding problem. The main theoretical results are presented in Section~\ref{sec: result}, while their proofs are deferred to Section~\ref{sec: the proof}. Finally, Section~\ref{sec: discussion} concludes with a discussion of the proposed methodology, its limitations, and several directions for future research.
\medskip

\noindent\textbf{Notation.} For $a,b\in\mathbb{R}$, we write \[ a\vee b\equiv\max(a,b), \] and define \[ a^-\equiv\min(a,0), \] which differs from the more common convention $a^-=(-a)\vee0$. We denote by $\mathcal{D}([0,\infty))$ the Skorokhod space of c\`adl\`ag functions on $[0,\infty)$, and write $\mathrm{CP}_{\lambda,B}$ for the compound Poisson distribution with arrival rate $\lambda$ and jump-size distribution $B(\cdot)$. For any random object $\mathcal A$, let $\sigma(\mathcal A)$ denote the $\sigma$-field generated by $\mathcal A$. Moreover, for any nonnegative random variable $V$, we denote its Laplace--Stieltjes transform by \[ \widehat V(z)\equiv \mathbb Ee^{-zV}, \qquad z\in\mathbb C_{>0}, \] where \[ \mathbb C_{>0}\equiv \{z\in\mathbb C:\Re(z)>0\}. \] Unless stated otherwise, all random variables and stochastic processes are defined on a common probability space $(\Omega,\mathcal F,\mathbb P)$. Whenever convenient, we write $\mathbb P_x$ and $\mathbb E_x$ to indicate that the underlying stochastic process is initiated from state $x$. Throughout the paper, we use the standard asymptotic notation $\sim$, $\mathcal O$, and $\Theta$. Thus, for positive sequences $(a_n)$ and $(b_n)$, \[ a_n\sim b_n \quad\Longleftrightarrow\quad \lim_{n\to\infty}\frac{a_n}{b_n}=1, \] \[ a_n=\mathcal O(b_n) \quad\Longleftrightarrow\quad \sup_{n\ge1}\frac{a_n}{b_n}<\infty, \] and \[ a_n=\Theta(b_n) \quad\Longleftrightarrow\quad a_n=\mathcal O(b_n) \ \text{and}\ b_n=\mathcal O(a_n). \] Finally, for each $n\ge1$, we define \begin{equation}\label{eq:m_n} m_n\equiv \left\lfloor \frac{n-1}{2} \right\rfloor, \end{equation} and \begin{equation}\label{eq:m_bar} \overline m_n\equiv \left\lfloor \frac{m_n}{2} \right\rfloor. \end{equation}
These sequences are introduced here for convenience and will appear repeatedly throughout the remainder of the paper.
\section{Problem description}\label{sec: problem}

Let $J\equiv(J_t)_{t\geq0}$ be a compound Poisson process with arrival rate $\lambda>0$ and jump-size distribution function $B(\cdot)$ satisfying $B(0)=0$ such that $J_0=0$, $\mathbb{P}$-a.s. For a prescribed $x\geq0$, the workload process $W\equiv(W_t)_{t\geq0}$ under consideration is obtained as the Skorokhod reflection (see, \cite{Kella2006}) at the origin of the process $Z\equiv(Z_t)_{t\geq0}$,
\begin{equation}\label{eq: Z}
Z_t\equiv Z_t\left(J,x\right)\equiv x +J_t-t,\qquad t\geq0,
\end{equation}
that is,
\begin{equation}\label{eq: W}
W_t\equiv W_t\left[Z(J,x)\right]\equiv Z_t-\inf_{0\leq s\leq t} Z_s^-,\qquad t\geq0.    
\end{equation}
Unlike most existing work, no stability assumption is imposed.
In particular, we do not require the classical condition
\[
\lambda \int_0^\infty x{\rm d}B(x)<1.
\]

The workload process is not observed continuously. Instead, the data is
\begin{equation*}
\mathcal{D}_n\equiv\left\{
W_k
\,;\,
0\leq k\leq n
\right\},    
\end{equation*}
for some $n\geq1$. Apart from the observations, the statistician is assumed to know the
sampling scheme and the probabilistic structure of the model.
The only unknown quantities are $\lambda$ and $B(\cdot)$. Given a fixed point $w>0$, the objective is to estimate the value $B(w)$ with an estimator which is solely based on $\mathcal{D}_n$.

The difficulty of this problem stems from the fact that the information contained in the observations is highly indirect. Between two consecutive observation epochs, the statistician cannot determine
\begin{itemize}
\item the number of arrivals,

\item the corresponding jump sizes,

\item the amount of idle time.
\end{itemize}

Indeed, the path of the workload process between observation epochs remains completely unobserved. Consequently, the workload values observed at times $k-1$ and $k$ generally do not reveal whether the system became empty during the interval $(k-1,k)$.

The following examples illustrate the severity of the information loss. Observe that even if $W_{k-1}=W_k=0$,
it is still possible that several arrivals occurred between the two observation epochs and that all associated workload was subsequently processed before time $k$. Conversely, even when
\[
W_{k-1}>0
\qquad\text{and}\qquad
W_k>0,
\]
the system may nevertheless have emptied one or more times during the interval $(k-1,k)$. Hence, neither the arrival process nor the busy and idle periods can be reconstructed directly from the observations.

The remainder of the paper shows that this apparent loss of information
is not fundamental. By identifying observation intervals on which the Skorokhod reflection is
inactive, we recover an embedded conditionally independent compound
Poisson sample, reducing the original inference problem to a classical
decompounding problem. This reduction ultimately leads to an estimator whose $L^1$-risk is
parametric up to a logarithmic factor.

\section{Estimator construction}\label{sec: construction}

The estimator proposed in this paper is obtained by combining a screening step with an appropriate transformation of the observed data. The purpose of the transformation is to extract an i.i.d. sample whose Laplace--Stieltjes transform (LST) is linked explicitly to the LST of the unknown jump-size distribution $B(\cdot)$. This connection can then be exploited to construct an estimator for $B$.

Our approach relies on the general estimation methodology developed by Den Boer and Mandjes \cite{Den Boer2017}. We therefore begin by recalling the main ingredients of their construction. Subsequently, we demonstrate how the workload observations introduced in Section~\ref{sec: problem} can be processed so that the resulting transformed sample falls within their framework, thereby yielding our estimator. 

\subsection{Decompounding based on LST relations}\label{sec: Den Boer}

The construction of our estimator relies on a simple relationship between the LST's of a compound Poisson random variable and its jump-size distribution.

Let $X$ be a compound Poisson random variable with arrival rate $\lambda$ and jump-size distribution function $F^Y$, where $F^Y(0)=0$. Its LST is given by
\[
\widehat X(z)
=
\exp\!\left\{
-\lambda\left[1-\widehat Y(z)\right]
\right\},
\qquad
z\in\mathbb{C}_{>0},
\]
where $\widehat Y$ denotes the LST of the jump-size distribution.

Taking the principal branch of the complex logarithm yields
\[
\widehat Y(z)
=
1+\frac{1}{\lambda}
\operatorname{Log}\!\left[\widehat X(z)\right],
\qquad
z\in\mathbb{C}_{>0}.
\]
This identity motivates defining the mapping
\begin{equation}\label{eq: Psi identity}
\Psi(f)
\equiv
1+\frac{1}{\lambda
}\operatorname{Log}(f),
\end{equation}
so that the above identity may be written compactly as
\[
\widehat Y=\Psi(\widehat X).
\]

To recover the distribution function itself, define
\[
\overline{F}^{Y}(z)
=
\frac{\widehat Y(z)}{z},
\qquad
z\in\mathbb{C}_{>0}.
\]
Since
\[
\overline{F}^{Y}(z)
=
\int_0^\infty e^{-zt}F^{Y}(t)\,dt,
\qquad
z\in\mathbb{C}_{>0},
\]
the distribution function $F^{Y}$ is obtained from $\overline{F}^{Y}$ by Laplace inversion.

Suppose now that $X_1,\ldots,X_n$ are independent copies of $X$. Replacing $\widehat X$ by its empirical counterpart,
\[
\widehat X_n(z)
=
\frac1n
\sum_{j=1}^{n}
e^{-zX_j},
\qquad
z\in\mathbb{C}_{>0},
\]
leads naturally to the plug-in estimator
\[
\overline{F}^{Y}_n(z)
=
\frac1z
\Psi\!\left[\widehat X_n(z)\right].
\]

Finally, motivated by Bromwich's inversion formula, and adopting the convention that $\mathrm{undefined}\cdot0\equiv0$, we define
\begin{equation}
\label{eq: FY estimator}
F_n^Y(x)
\equiv
\mathbf 1_{E_n}
\frac1{2\pi}
\int_{-\sqrt n}^{\sqrt n}
e^{(c+iy)x}
\overline F_n^Y(c+iy)\,dy,
\qquad
x>0,
\end{equation}
where $c>0$ is an arbitrary constant and
\[
E_n
=
\left\{
\frac1n
\sum_{j=1}^{n}
\mathbf1_{\{X_j=0\}}
\in(0,1)
\right\}.
\]
The event $E_n$ guarantees that the empirical transform remains in the domain of the principal logarithm.

The following theorem, due to Den Boer and Mandjes \cite[Theorem~3]{Den Boer2017}, provides a non-asymptotic performance guarantee for the above estimator. 

\begin{theorem}[Den Boer and Mandjes (2017)]\label{thm: Den Boer}
Assume that $F^Y$ is continuously differentiable on $[0,\infty)$, twice differentiable at $w>0$, and satisfies
\[
\int_0^\infty y^2\,{\rm d}F^Y(y)<\infty.
\]
Then, there exists a constant $C>0$ such that
\[
\mathbb E\!\left|F_n^Y(w)-F^Y(w)\right|
\le
C\,\frac{\log(n+1)}{\sqrt n},
\]
for every $n\ge1$.
\end{theorem}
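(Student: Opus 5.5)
The plan is to split the error of $F_n^Y(w)$ into three parts, using the Bromwich inversion formula for the true distribution function. Since $F^Y$ is continuously differentiable and twice differentiable at $w$, the inversion integral converges at $w$:
\[
F^Y(w)=\lim_{T\to\infty}\frac{1}{2\pi}\int_{-T}^{T}e^{(c+iy)w}\,\overline F^Y(c+iy)\,dy .
\]
We then bound $\mathbb E|F_n^Y(w)-F^Y(w)|$ by the sum of three terms:
\begin{itemize}
\item[(i)] the \emph{truncation bias} $\bigl|\frac1{2\pi}\int_{|y|>\sqrt n}e^{(c+iy)w}\,\overline F^Y(c+iy)\,dy\bigr|$;
\item[(ii)] the contribution of the complement event, $|F^Y(w)|\,\mathbb P(E_n^c)$;
\item[(iii)] the \emph{stochastic term} $\frac{e^{cw}}{2\pi}\int_{-\sqrt n}^{\sqrt n}\mathbb E\bigl[\mathbf 1_{E_n}\,|\Psi(\widehat X_n(c+iy))-\Psi(\widehat X(c+iy))|\bigr]\,\frac{dy}{|c+iy|}\,dy$-integrand as written, i.e.\ the integral over $|y|\le\sqrt n$ of the $L^1$ distance between the empirical and true plug-in transforms.
\end{itemize}

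\textbf{Term (ii).} Since $\mathbb P(X=0)=e^{-\lambda}\in(0,1)$, the event $E_n^c$ means all $X_j$ are zero or all are nonzero. Hence $\mathbb P(E_n^c)$ decays geometrically in $n$, which is negligible.

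\textbf{Term (i).} We integrate by parts twice in $\overline F^Y(z)=\widehat Y(z)/z$, using the differentiability of $F^Y$. Writing $\widehat Y(z)=\int e^{-zt}f(t)\,dt$ with $f=(F^Y)'$, one more integration by parts (or a local Taylor expansion at $w$) shows the tail integral is $\mathcal O(n^{-1/2})$. The oscillating factor $e^{iyw}$ paired with $1/y$-type decay gives a conditionally convergent integral. The smoothness at $w$ (twice differentiable) is exactly what upgrades the Dirichlet-kernel error $\int_{|y|>T}\frac{\sin(y u)}{y}\ldots$ to order $1/T$. I would follow the standard Fourier-inversion argument: rewrite the truncated integral as a convolution of $e^{-ct}F^Y(t)$ with the Dirichlet kernel $\sin(Tu)/(\pi u)$, then split into $|u|\le\delta$ (using the second-order expansion at $w$) and $|u|>\delta$ (Riemann--Lebesgue with a rate via integration by parts using $C^1$).

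\textbf{Term (iii), the main obstacle.} On $|y|\le\sqrt n$ with $z=c+iy$, we control $|\operatorname{Log}\widehat X_n(z)-\operatorname{Log}\widehat X(z)|$. The key fact is $|\widehat X(z)|\ge e^{-2\lambda}$ uniformly, because $\Re\,\widehat Y(z)\ge -1$ gives $|\widehat X(z)|=e^{-\lambda(1-\Re\widehat Y(z))}$. On the event $\{|\widehat X_n(z)-\widehat X(z)|\le\tfrac12 e^{-2\lambda}\}$, the Log is Lipschitz, yielding the bound $C|\widehat X_n-\widehat X|$. Its expectation is at most $C\,\mathrm{Var}^{1/2}\le C/\sqrt n$, since $|e^{-zX}|\le1$.

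The delicacy is the principal branch. Along a path the argument can wind, so $\operatorname{Log}\widehat X$ need not equal $-\lambda(1-\widehat Y)$. I would handle this by noting that the paper's $\Psi$ uses the principal branch on both sides, so only closeness of the two Log values matters. Near the branch cut, closeness of $\widehat X_n$ and $\widehat X$ may produce a $2\pi i$ jump. I expect this to require the lower bound on $|\widehat X|$ together with a measure argument showing that the set of $y$ where $\widehat X(c+iy)$ is near the negative axis contributes acceptably. If this fails, I would instead use the distinguished logarithm as in Den Boer--Mandjes' argument.

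On the complementary bad event we use the crude bound $|\operatorname{Log}\widehat X_n|\le|\log|\widehat X_n||+\pi$. On $E_n$ we have $|\widehat X_n|\ge$ the empirical mass at zero, which is at least $1/n$, so $|\log|\widehat X_n||\le\log n$. Hoeffding gives probability $\le 4e^{-cn}$ for the bad event, uniformly in $y$.

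Integrating $C n^{-1/2}/|c+iy|$ over $|y|\le\sqrt n$ produces the factor $\log n$, which gives the overall bound $C\log(n+1)/\sqrt n$.
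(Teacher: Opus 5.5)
\textbf{Verdict: genuine gaps in term (iii).} The paper does not prove this statement; it imports it from Den Boer and Mandjes (their Theorem~3), so your argument has to stand on its own. Your skeleton is sensible. It consists of the truncation bias via the Dirichlet kernel, an exponentially small $\mathbb P(E_n^c)$, and a stochastic term in which $\mathbb E|\widehat X_n(z)-\widehat X(z)|\le n^{-1/2}$ is integrated against $|c+iy|^{-1}$ over $|y|\le\sqrt n$, which is where the $\log n$ comes from. However, term (iii) rests on a false inequality.

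\textbf{First gap: the lower bound on $|\widehat X_n(z)|$.} It is not true that $|\widehat X_n(z)|$ is at least the empirical mass at zero. In $\widehat X_n(z)=\frac1n\#\{j:X_j=0\}+\frac1n\sum_{j:X_j>0}e^{-zX_j}$, the second sum can exceed the atom in modulus and, for $y\neq0$, can have any phase, so it can cancel the atom. Already at population level, $|\widehat X(c+iy)|=e^{-\lambda}e^{\lambda\Re\widehat Y(c+iy)}$ falls below $\mathbb P(X=0)=e^{-\lambda}$ whenever $\Re\widehat Y(c+iy)<0$. The event $E_n$ only concerns the empirical atom, despite the paper's gloss, and on it the curve $y\mapsto\widehat X_n(c+iy)$ can pass through, or arbitrarily close to, $0$. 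Hence $|\operatorname{Log}\widehat X_n|\le\log n+\pi$ fails, and the bad-event contribution is uncontrolled. Closing this requires a genuine small-ball estimate, e.g.\ $\sup_{|y|\le\sqrt n}\mathbb E\bigl[\mathbf 1_{E_n}\log^2|\widehat X_n(c+iy)|\bigr]\le Cn^k$, combined via Cauchy--Schwarz with your Hoeffding bound. Such an estimate must exploit the regularity of the law of the positive $X_j$, and nothing in your sketch supplies it.

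\textbf{Second gap: the branch cut, which you flag but do not resolve.} A continuous branch of the logarithm is Lipschitz on the disc of radius $\frac12e^{-2\lambda}$ about $\widehat X(z)$. The principal branch, however, may jump across $(-\infty,0]$ inside that disc. A measure argument over bad $y$ is not the right tool, and switching to the distinguished logarithm changes the estimator. What is needed is a uniform margin from the cut, obtained as follows.
\begin{itemize}
\item Your decomposition silently uses $\Psi(\widehat X)=\widehat Y$ on $\Re z=c$ to identify the centred integral with the Bromwich integral of $F^Y$. So it is not true that ``only closeness of the two Log values matters''.
\item That identity forces $\lambda\Im\widehat Y(c+iy)\in(-\pi,\pi]$, hence $|\lambda\Im\widehat Y(c+iy)|<\pi$ by conjugate symmetry.
\item Since $\widehat Y(c+iy)\to0$ as $|y|\to\infty$ (Riemann--Lebesgue), $\theta\equiv\sup_y\lambda|\Im\widehat Y(c+iy)|<\pi$.
\item Then $|\operatorname{Arg}\widehat X(c+iy)|\le\theta$ and $|\widehat X(c+iy)|\ge e^{-2\lambda}$ keep the curve at distance at least $\rho\equiv e^{-2\lambda}\sin(\max\{\theta,\pi/2\})$ from $(-\infty,0]$.
\item On $\{|\widehat X_n-\widehat X|\le\rho/2\}$ the principal logarithm is therefore $(2/\rho)$-Lipschitz along the segment, uniformly in $y$.
\end{itemize}
If instead $|\lambda\Im\widehat Y(c+iy)|>\pi$ for some $y$, then $\Psi(\widehat X)-\widehat Y$ is a nonzero multiple of $2\pi i/\lambda$ on an open set of $y$. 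This is possible for any fixed $c$, e.g.\ for nearly degenerate $F^Y$ with $\lambda\widehat Y(c)>\pi$. The estimator is then in general asymptotically biased. So the principal-branch identity is a hypothesis to be used, not something to be argued around.

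\textbf{Minor point on term (i).} Make explicit that $u\mapsto[h(w-u)-h(w)+h'(w)u]/u$, with $h(t)=e^{-ct}F^Y(t)$, is Lipschitz near $0$. This is where $C^1$ and twice differentiability at $w$ enter. A bare $o(u^2)$ remainder gives only $o(1)$, not $\mathcal O(1/\sqrt n)$.
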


\subsection{Data transformation and screening}
We now introduce a transformation of the workload observations that identifies those sampling intervals during which the Skorokhod reflection remains inactive. On such intervals, the observed workload increment coincides exactly with the increment of the underlying compound Poisson input process. This observation provides the crucial link between the dependent workload observations and the decompounding methodology recalled in Section~\ref{sec: Den Boer}. To formalize this idea, for each $k\ge0$ define \[ I_k\equiv\mathbf1_{\{W_k>1\}}, \qquad X_k\equiv W_{k+1}-W_k+1. \] Since the sampling interval has unit length, the condition $W_k>1$ guarantees that the workload cannot reach the reflecting boundary during the interval $[k,k+1]$. Consequently, the Skorokhod reflection remains inactive throughout this interval, and \[ X_k=J_{k+1}-J_k. \] Hence, conditional on the event $\{I_k=1\}$, the random variable $X_k$ has the compound Poisson distribution $\mathrm{CP}_{\lambda,B}$. The estimator is constructed from a screened sample of such observations. To this end, define \[ K_n \equiv \sum_{k=0}^{m_n} I_k\mathbf1_{\{X_k>2\}}, \qquad L_n \equiv \sum_{k=m_n+1}^{n} I_k, \] where $m_n$ is defined in \eqref{eq:m_n}. The random variable $K_n$ counts the number of screened observations satisfying $X_k>2$ in the first stage of the sample, whereas $L_n$ counts the number of screened observations available in the second stage. The first-stage observations are used solely to determine the effective sample size, while the second-stage observations are used exclusively for estimation. This separation is a key ingredient of the construction, as it ensures that the random sample size employed by the estimator is selected independently of the observations to which the estimation procedure is ultimately applied. Next, let $\tau_i$ denote the $i$th index after time $m_n$ satisfying the screening condition, namely \[ \tau_1 \equiv \inf\{k\ge m_n+1:\,I_k=1\}, \] and, for $i\ge2$, \[ \tau_i \equiv \inf\{k>\tau_{i-1}:\,I_k=1\}. \] 
From the analysis presented in Section~\ref{sec: the proof}, it will follow that $\tau_i<\infty$ almost surely for every $i\geq 1$. Consequently, the sequence
\[
\tau_1,\tau_2,\ldots
\]
is well defined and enumerates the indices of the second-stage observations that satisfy the screening criterion. The corresponding random variables
\[
X_{\tau_1},X_{\tau_2},\ldots
\]
constitute the screened sample used for estimation. Using these observations, we get the empirical Laplace--Stieltjes transform \begin{equation}\label{eq: X_n} \widehat X_n(z) = \frac1n \sum_{k=1}^{n} e^{-zX_{\tau_k}}, \qquad z\in\mathbb C_{>0}, \end{equation} together with the admissibility event \begin{equation}\label{eq: E_n} E_n = \left\{ \frac1n \sum_{k=1}^{n} \mathbf1_{\{X_{\tau_k}=0\}} \in(0,1) \right\}. \end{equation} We are now in a position to define the proposed estimator: \begin{align} B_n(w) &\equiv \mathbf1_{\{K_n<L_n\}} \mathbf1_{\{K_n\ge4\}} \,\overline B_n(w) \label{eq: estimator}\\ &\equiv \mathbf1_{\{K_n<L_n\}} \mathbf1_{\{K_n\ge4\}} \mathbf1_{E_{K_n}} \frac1{2\pi} \int_{-\sqrt{K_n}}^{\sqrt{K_n}} e^{(c+iy)w} \, \overline F_{K_n}^{Y}(c+iy) \,dy. \nonumber \end{align} The construction of $B_n$ proceeds as follows. We first check whether \[ K_n<L_n \qquad\text{and}\qquad K_n\ge4. \] If this event occurs, we apply the inversion procedure of Section~\ref{sec: Den Boer} to the first $K_n$ screened observations, namely \begin{equation}\label{eq: subsequence} X_{\tau_1},X_{\tau_2},\ldots,X_{\tau_{K_n}}, \end{equation} using the empirical transform \eqref{eq: X_n} together with the admissibility event \eqref{eq: E_n}. The resulting estimator is denoted by $\overline B_n$. As will be established in the next section, conditional on the first-stage history \begin{equation}\label{eq: history} \mathcal H_{m_n+1}\equiv \sigma(\mathcal D_{m_n+1}), \end{equation} the random variables in \eqref{eq: subsequence} are independent and identically distributed with common distribution $\mathrm{CP}_{\lambda,B}$. Thus, conditional on $\mathcal H_{m_n+1}$, the distribution of the screened data is consistent with the setup discussed in Section~\ref{sec: Den Boer}. Consequently, the choice of the decompounding estimator $\overline B_n$ by Den Boer and Mandjes becomes very intuitive. If the event \[ \{K_n<L_n,\;K_n\ge4\} \] does not occur, we simply set $B_n\equiv0$. Since this decision depends only on the observed data $\mathcal D_n$, the estimator is well defined and fully data-driven.
\begin{remark} \normalfont A natural alternative estimator construction would be to bypass the first screening stage and apply the estimator of Section~\ref{sec: Den Boer} directly to the entire screened sample \[ \{X_k;\, I_k=1,\;0\leq k\leq n-1\}. \] At first sight, this construction appears more attractive since it retains all available observations. However, the resulting sample does not satisfy the probabilistic assumptions required by the decompounding procedure. More precisely, conditional on its random size, the retained observations are generally not i.i.d. with the common compound Poisson distribution. The difficulty stems from the dependence between the screening indicators and the workload trajectory. Conditioning on the event that exactly $m$ observations are retained, the screened sample no longer has the joint distribution of an i.i.d. compound Poisson sample. The first screening stage is therefore essential, as it decouples the random sample size from the screened observations, thereby ensuring the sample properties required for the decompounding analysis.
\end{remark}

\section{Main result}\label{sec: result} We now state the main result of the paper. It establishes that the proposed estimator achieves a nearly parametric $L^1$-risk under remarkably mild assumptions. In particular, neither stationarity nor stability of the queue is required, and the arrival rate may remain unknown. The proof is deferred to Section~\ref{sec: the proof}. \begin{theorem}\label{thm: main} Assume that $B(0)=0$, that $B(\cdot)$ is continuously differentiable on $[0,\infty)$, twice differentiable at a prescribed point $w>0$, and satisfies \[ \int_0^\infty y^2\,{\rm d}B(y)<\infty. \] Then, \[ \mathbb E\!\left|B_n(w)-B(w)\right| = \mathcal O\!\left( \frac{\log n}{\sqrt n} \right), \qquad n\to\infty. \] \end{theorem}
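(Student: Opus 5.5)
We bound the risk by splitting on the event $G_n\equiv\{K_n<L_n,\ K_n\ge 4\}$ and its complement. On $G_n^c$ we have $B_n(w)=0$, so the risk there is at most $\mathbb P(G_n^c)$. On $G_n$ we condition on $\mathcal H_{m_n+1}$ and apply Theorem~\ref{thm: Den Boer} with a random but $\mathcal H_{m_n+1}$-measurable sample size $K_n$. The bound then reads
\[
\mathbb E|B_n(w)-B(w)|\le \mathbb P(G_n^c)+C\,\mathbb E\!\left[\mathbf 1_{G_n}\frac{\log(K_n+1)}{\sqrt{K_n}}\right].
\]
The proof therefore has two parts. First, a probabilistic reduction shows that, conditional on $\mathcal H_{m_n+1}$, the variables $X_{\tau_1},X_{\tau_2},\ldots$ are i.i.d.\ $\mathrm{CP}_{\lambda,B}$. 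Second, a large-deviation argument shows that $K_n$ is of order $n$ with overwhelming probability and that $K_n<L_n$ with high probability.

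For the reduction, the key step is a strong-Markov argument. Each $\tau_i$ is a stopping time for the discrete filtration $\mathcal H_k=\sigma(W_0,\ldots,W_k)$, since $I_k$ is $\mathcal H_k$-measurable. On $\{\tau_i=k\}$ we have $W_k>1$, so the reflection is inactive on $[k,k+1]$ and $X_k=J_{k+1}-J_k$. This increment is independent of $\sigma(J_s:s\le k)\supseteq\mathcal H_k$ by independent increments of $J$.

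We proceed by induction on $i$. For bounded measurable $f_1,\ldots,f_i$, condition on $\mathcal H_{\tau_i}$: the factor $f_i(X_{\tau_i})$ integrates to $\mathbb E f_i(\mathrm{CP}_{\lambda,B})$, and $X_{\tau_1},\ldots,X_{\tau_{i-1}}$ are $\mathcal H_{\tau_i}$-measurable because $\tau_{j}+1\le\tau_i$. This yields the i.i.d.\ claim conditionally on $\mathcal H_{m_n+1}$, since $\tau_1\ge m_n+1$. One still needs $\tau_i<\infty$ a.s.; this follows because $W$ exceeds $1$ infinitely often. Indeed, each unit interval carries an independent chance $p\equiv\mathbb P(J_1>3)>0$ of a large jump, which forces $W_{k+1}>1$ regardless of $W_k$.

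Since $K_n$ is $\mathcal H_{m_n+1}$-measurable (it uses $W_0,\ldots,W_{m_n+1}$), Theorem~\ref{thm: Den Boer} applies conditionally with $n$ replaced by $K_n$. Its constant $C$ depends only on $B$ and $\lambda$, not on the conditioning. One subtlety is that the estimator uses only $X_{\tau_1},\ldots,X_{\tau_{K_n}}$, which exist whether or not $K_n<L_n$; the indicator $\mathbf 1_{\{K_n<L_n\}}$ is simply bounded by $1$ on this term.

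For the counting estimates, note that the indicators $\xi_k\equiv\mathbf 1_{\{J_{k+1}-J_k>3\}}$ are i.i.d.\ Bernoulli$(p)$. We have $I_{k+1}\ge\xi_k$. Also $I_k\mathbf 1_{\{X_k>2\}}\ge\mathbf 1_{\{W_k>1\}}\xi_k$. To lower bound $K_n$, use even and odd interleaving: $I_{k}\mathbf 1_{\{X_k>2\}}\ge\xi_{k-1}\xi_k$. Summing over even $k\le m_n$ gives a sum of i.i.d.\ Bernoulli$(p^2)$ variables, so Hoeffding gives $\mathbb P(K_n<p^2\overline m_n/2)\le e^{-c n}$. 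On the complementary event, $\log(K_n+1)/\sqrt{K_n}=\mathcal O(\log n/\sqrt n)$, and the ratio is bounded by a constant elsewhere.

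The main obstacle is bounding $\mathbb P(K_n\ge L_n)$ by $\mathcal O(\log n/\sqrt n)$ without any stability assumption. In the transient case $W_k\to\infty$ and both counts are close to linear, but in the stable case $L_n$ and $K_n$ have different means. The intuition is that $K_n$ counts a strictly thinner event, $\{X_k>2\}$, over roughly the same number of steps. I would try to compare $\mathbb E K_n$ with $\mathbb E L_n$ directly. The plan is to couple the second block with the first via the bound $L_n\ge\sum_{k=m_n+1}^{n}\xi_{k-1}\ldots$. If that is too crude, I would use a sharper step: every time $I_k\mathbf 1_{\{X_k>2\}}=1$ we have $W_{k+1}>2$, hence $I_{k+1}=1$, and this event also has probability bounded below when $X_k\le 2$. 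Combined with martingale concentration (Azuma for $\sum (I_k-\mathbb E[I_k\mid\mathcal H_{k-1}])$), this should show that $L_n-K_n$ exceeds $\delta n$ except on an event of exponentially small probability. Collecting the three terms then gives the claimed $\mathcal O(\log n/\sqrt n)$ bound.
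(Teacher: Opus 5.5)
There is a genuine gap at the term $\mathbb P(K_n\ge L_n)$, which you leave as a plan rather than a proof. The rest holds up. Your reduction step is correct, and your independent-increments argument is cleaner than the paper's appeal to the strong Markov property. Your handling of the decompounding term also works: the Hoeffding tail of $\sum_{k\ \mathrm{even}}\xi_{k-1}\xi_k$ plus monotonicity of $x\mapsto\log(x+1)/\sqrt{x}$ suffices, and it is simpler than the paper's second-order delta method.

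Neither idea you sketch for $\mathbb P(K_n\ge L_n)$ closes it.
\begin{itemize}
\item The crude coupling $L_n\ge\sum_{k=m_n+1}^{n}\xi_{k-1}$ has mean about $\frac n2\mathbb P(J_1>3)$. The natural state-independent bound on $K_n$ is $K_n\le V_n\equiv\sum_{k=0}^{m_n}\mathbf 1_{\{J_{k+1}-J_k>2\}}$, because $X_k=J_{k+1}-J_k$ on $\{I_k=1\}$, and $V_n$ has the larger mean $\frac n2\mathbb P(J_1>2)$. This bound cannot be improved without information on the state, since $K_n\approx(m_n+1)\mathbb P(J_1>2)$ when $W_k\to\infty$.
\item Your ``sharper step'' ($I_k\mathbf 1_{\{X_k>2\}}=1\Rightarrow I_{k+1}=1$) is a relation inside the first block. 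It compares $K_n$ with $\sum_{k\le m_n+1}I_k$, not with $L_n$, which lives on the disjoint block $\{m_n+1,\ldots,n\}$. Comparing occupation counts across the two halves is exactly where stability or ergodicity would creep back in.
\item Azuma does not help by itself either. It gives $K_n\lesssim(m_n+1)\mathbb P(J_1>2)$, and, by monotonicity in the initial state, $L_n\gtrsim(n-m_n)\mathbb P_0(W_1>1)$. You would still need a strict inequality between these two constants, which your sketch never supplies.
\end{itemize}

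The missing idea is to sandwich both counts between state-independent sums, as the paper does. It uses $K_n\le V_n\sim\mathrm{Bin}(m_n+1,p)$ with $p=\mathbb P(J_1>2)$. It also uses $L_n\ge A_n\sim\mathrm{Bin}(n-m_n,q)$ with $q=\mathbb P_0(W_1>1)$, obtained by restarting the queue from $0$ at every epoch. A linear gap $\frac{q-p}{2}n$ in the means plus Hoeffding then gives exponential decay.

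Be aware that the strict inequality $q>p$ asserted in the paper is not automatic. If $B(2)=0$, which the hypotheses allow, then under $\mathbb P_0$ both $\{W_1>1\}$ and $\{J_1>2\}$ coincide with the event that at least one arrival occurs in $[0,1]$, so $q=p$.

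A robust version uses two intervals. Since $W_k\ge J_k-J_{k-1}-1$ and $W_k\ge J_k-J_{k-2}-2$, one has $I_k\ge\zeta_k\equiv\mathbf 1_{\{J_k-J_{k-1}>2\}}\vee\mathbf 1_{\{J_k-J_{k-2}>3\}}$. The sequence $(\zeta_k)$ is $1$-dependent with $\mathbb E\zeta_k\ge p+e^{-\lambda}\mathbb P(J_1>3)>p$. Apply Hoeffding to $V_n$, and separately to the even- and odd-indexed $\zeta_k$ with $m_n+1\le k\le n$. This yields $\mathbb P(L_n\le K_n)\le e^{-cn}$ for some $c>0$ and all large $n$, which would complete your proof.
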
 \begin{remark} \normalfont The assumptions of Theorem~\ref{thm: main} are remarkably mild. Most notably, neither stationarity nor stability of the queue is required. Furthermore, the only global regularity assumptions imposed on the service-time distribution are continuous differentiability together with the finite second-moment condition \[ \int_0^\infty y^2\,{\rm d}B(y)<\infty. \] The second differentiability assumption is purely local, being required only at the estimation point $w$. Finally, the condition $B(0)=0$ simply reflects the fact that service times are strictly positive, and is standard in the analysis of M/G/1 queues. \end{remark} 

\begin{remark} \normalfont The assumption of unit sampling intervals is made solely for notational convenience. Indeed, if the workload process is observed at times \[ 0,\delta,2\delta,\ldots,n\delta, \] for some $\delta>0$, then a simple linear rescaling of time reduces the model to the unit-time setting considered throughout the paper. In particular, the screening condition becomes \[ I_k=\mathbf1_{\{W_{k\delta}>\delta\}}, \] which again guarantees that the Skorokhod reflection remains inactive over the selected sampling intervals. Consequently, all arguments presented in the paper remain valid after this straightforward modification, and the conclusion of Theorem~\ref{thm: main} continues to hold without change. \end{remark}

\section{Proof of Theorem~\ref{thm: main}}\label{sec: the proof}
This section is devoted to the proof of Theorem~\ref{thm: main}. After the outline of the proof given below, we establish a sequence of auxiliary results describing the probabilistic properties of the proposed screening procedure. These results culminate in the conditional independence property required for the decompounding methodology of Theorem~\ref{thm: Den Boer}. The proof of the main theorem is then obtained by combining these ingredients with suitable probabilistic estimates for the screening mechanism.

\subsection{Outline of the proof} The proof of Theorem~\ref{thm: main} is based on a probabilistic reduction of the original inference problem. Rather than analyzing the dependent workload observations directly, we first show that the proposed two-stage screening procedure extracts, conditionally on the first-stage history, a sample of independent and identically distributed compound Poisson random variables. This reduction transforms the original estimation problem into a classical decompounding problem, allowing us to exploit the methodology recalled in Section~\ref{sec: Den Boer}. The main technical challenge is to justify this reduction rigorously. To this end, we first establish several structural properties of the screening procedure. In particular, we prove that, conditional on the first-stage history, the screened observations retained for estimation are independent and identically distributed with common law $\mathrm{CP}_{\lambda,B}$. Subsequently, we derive quantitative bounds showing that the probability of an unsuccessful screening event decreases sufficiently rapidly with the sample size. Once these ingredients have been established, the proof of Theorem~\ref{thm: main} follows by combining the conditional decompounding result of Den Boer and Mandjes with the probabilistic estimates obtained for the screening procedure. The logarithmic factor in the convergence rate is therefore inherited entirely from the decompounding estimator, while the screening step itself does not introduce any additional loss in the order of convergence.

\subsection{Coupling}
The purpose of this part is to construct a coupling between the workload process and suitable sequences of independent Bernoulli random variables. This coupling provides stochastic upper and lower bounds for the random sample sizes $K_n$ and $L_n$, which will play a crucial role in the asymptotic analysis of the estimator.

Throughout the remainder of the paper, we shall work with the following coupling construction of the workload process. Let
$(\Omega,\mathcal F,\mathbb P)$
be a probability space supporting an i.i.d. sequence
$J^1,J^2,\ldots$
of stochastic processes distributed as $J$.
Since the workload process is a strong Markov process (see, Corollary~2.8 of Chapter~IX in \cite{Asmussen2003}), the workload process admits the recursive representation
\begin{equation}\label{eq: recursion}
W_k
=\begin{cases}
    x & \text{if}\quad k=0 \\
    W_1\!\left[Z(J^k,W_{k-1})\right]  & \text{if}\quad k\geq1
\end{cases}\,.
\end{equation}

\begin{lemma}\label{lem: coupling}
There exist  $0<q<p<1$ and three random variables $A_n,K_n^*$ and $V_n$ such that:
\begin{enumerate}
    \item[(i)] $A_n\leq L_n$ almost surely and 
    \begin{equation*}
        A_n\sim\text{\normalfont Bin}(n-m_n,q).
    \end{equation*}

    \item[(ii)] $K_n\leq V_n$ almost surely such that 
    \begin{equation*}
    V_n\sim\text{\normalfont Bin}(m_n+1,p).
    \end{equation*}
    \item[(iii)] $K_n^*\leq K_n$ almost surely such that 
   \begin{equation*}
    K_n^*\sim\text{\normalfont Bin}(\overline m_n+1,p^2).
    \end{equation*}
    
     \item[(iv)] $A_n$ and $(K_n^*,V_n)$ are independent.  \end{enumerate}

\end{lemma}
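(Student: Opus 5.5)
The plan is to build all four random variables from the i.i.d. input pieces $J^1,J^2,\ldots$ of the recursion \eqref{eq: recursion}, plus auxiliary independent uniforms where needed. The basic fact is a pathwise lower bound for the reflection map: for every starting point $y\ge0$ and $t\in[0,1]$, $W_t[Z(J,y)]\ge Z_t(J,y)=y+J_t-t$. In particular
\[
W_k\ge W_{k-1}+J^k_1-1\ge J^k_1-1 .
\]
Thus $\{J^k_1>2\}\subseteq\{I_k=1\}$, whatever the past. Moreover, on $\{I_k=1\}$ no reflection occurs on $[k,k+1]$, so $X_k=J^{k+1}_1$. I set
\[
p\equiv\mathbb P(J_1>2)=\mathrm{CP}_{\lambda,B}\big((2,\infty)\big).
\]
It lies in $(0,1)$ because $\lambda>0$, $B(0)=0$ and $\mathbb P(J_1=0)=e^{-\lambda}>0$. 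The smaller $q$ will come from a more restrictive event, for instance one driven only by part of the unit interval, such as $\{J_1-J_{1/2}>2\}$. This uses $W_1\ge W_{1/2}-\tfrac12$ together with $W_{1/2}\ge$ the increment over $[0,1/2]$.

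For (ii), I would use $I_k\mathbf 1_{\{X_k>2\}}=I_k\mathbf 1_{\{J^{k+1}_1>2\}}\le \mathbf 1_{\{J^{k+1}_1>2\}}$ and take
\[
V_n\equiv\sum_{k=0}^{m_n}\mathbf 1_{\{J^{k+1}_1>2\}},
\]
which is $\mathrm{Bin}(m_n+1,p)$ by independence of the $J^k$. For (iii), I would pair consecutive indices $(2j,2j+1)$ with $0\le j\le\overline m_n$. If $J^{2j+1}_1>2$ then $I_{2j+1}=1$, and if in addition $J^{2j+2}_1>2$ then $X_{2j+1}>2$. Hence
\[
K_n\ge K_n^*\equiv\sum_{j=0}^{\overline m_n}\mathbf 1_{\{J^{2j+1}_1>2\}}\mathbf 1_{\{J^{2j+2}_1>2\}},
\]
a sum of independent $\mathrm{Bernoulli}(p^2)$ variables, since the pairs use disjoint inputs. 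One has to check that $2\overline m_n+1\le m_n$, so that all paired indices lie in the first stage. Then $K_n^*$ and $V_n$ are both measurable with respect to $\sigma(J^1,\ldots,J^{m_n+1})$.

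For (i), I would set $A_n\equiv\sum_{k=m_n+1}^{n}\xi_k$ with $\xi_k\le I_k$, where $\xi_k$ is a function of $J^k$ alone (and possibly auxiliary randomization) and has success probability $q$. The natural choice is $\xi_k=\mathbf 1_{\{J^k_1>2\}}$, or a thinned version of it. Independence (iv) then reduces to checking that $A_n$ and $(K_n^*,V_n)$ depend on disjoint blocks of the i.i.d. sequence.

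The main obstacle is the boundary index $k=m_n+1$: the last summand of $V_n$ and the first summand of $A_n$ are both driven by $J^{m_n+1}$. Any event forcing $I_{m_n+1}=1$ regardless of $W_{m_n}$ must make $J^{m_n+1}$ large. An obvious upper bound for the $k=m_n$ term of $K_n$ then also involves $J^{m_n+1}$.

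I would first try to split $J^{m_n+1}$ into independent pieces, namely its increments over $[0,\tfrac12]$ and $[\tfrac12,1]$. One piece would lower-bound $I_{m_n+1}$, with a smaller probability $q$, which explains why $q<p$. The other piece, combined with a top-up auxiliary uniform to restore the exact marginal $p$, would dominate the last summand of $K_n$. I have not verified this: domination seems to need $\mathbf 1_{\{J_1>2\}}$, which depends on both halves. If the split fails, I would fall back on a conditional coupling for that single index. This means building the two boundary Bernoullis jointly so that their conditional laws given the rest are product-form; if that too is impossible, I would move the boundary term to an extra independent block, which costs one observation and does not affect any later rate.
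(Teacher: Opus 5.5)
Your (i) and (ii) are fine, and your $V_n$ and $K_n^*$ are exactly the paper's constructions. The genuine gap is (iv), precisely where you located it.

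\textbf{The boundary overlap in (iv).} With $V_n=\sum_{k=0}^{m_n}\mathbf 1_{\{J^{k+1}_1>2\}}$, the $k=m_n$ summand is driven by $J^{m_n+1}$, and so is the first summand $\xi_{m_n+1}$ of your $A_n$. The paper's proof uses the same $V_n$ together with $A_n=\sum_{k=m_n+1}^{n}\mathbf 1_{\{W_1[Z(J^k,0)]>1\}}$. It then simply asserts that $(K_n^*,V_n)$ depends only on $J^1,\dots,J^{m_n}$, which is false; in that construction $\mathrm{Cov}(A_n,V_n)=p(1-q)>0$. So your diagnosis is correct, but the proposal does not close the gap.

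The half-split repair, as you describe it (replacing only the last summand of $V_n$), cannot work. Suppose the last summand $\zeta$ is built from one half-increment of $J^{m_n+1}$, auxiliary randomness and $J^1,\dots,J^{m_n}$. Then the other half-increment $\Delta$ is independent of $(\zeta,I_{m_n})$. Since $\{\Delta>2\}\subseteq\{J^{m_n+1}_1>2\}$ has positive probability, the requirement $\zeta\ge I_{m_n}\mathbf 1_{\{J^{m_n+1}_1>2\}}$ forces $\zeta=1$ a.s.\ on $\{I_{m_n}=1\}$. Keeping your other $m_n$ summands, this gives $\mathbb P(V_n=m_n+1)\ge\mathbb P(J^1_1>2,\dots,J^{m_n}_1>2)=p^{m_n}>p^{m_n+1}$, which contradicts the binomial law. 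Your ``conditional coupling'' alternative is never specified. Your last resort, $A_n\sim\mathrm{Bin}(n-m_n-1,q)$, proves a different statement.

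\textbf{A second problem in (iii).} The check $2\overline m_n+1\le m_n$ that you postpone fails whenever $m_n$ is even. In that case the last pair $\mathbf 1_{\{J^{m_n+1}_1>2\}}\mathbf 1_{\{J^{m_n+2}_1>2\}}$ is dominated by no summand of $K_n$. For example, if $x\le1$, $J^1_1=\dots=J^{m_n}_1=0$ and $J^{m_n+1}_1,J^{m_n+2}_1>2$, then $K_n^*=1>0=K_n$. This pair also creates a second overlap with $A_n$. The paper's $K_n^*$ has the same off-by-one.

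The clean repair is your fallback applied consistently. Take $A_n=\sum_{k=m_n+2}^{n}\xi_k\sim\mathrm{Bin}(n-m_n-1,q)$. Use only pairs inside $\{1,\dots,m_n+1\}$, so that $K_n^*\sim\mathrm{Bin}(\lfloor (m_n+1)/2\rfloor,p^2)$. Then (iv) follows from the disjoint blocks $J^1,\dots,J^{m_n+1}$ and $J^{m_n+2},\dots,J^n$. This means restating the lemma, but the $\mathcal O(1)$ changes are harmless for the later Hoeffding bounds.

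\textbf{The orientation of $q$ and $p$.} Your $q<p$ follows the printed statement. The paper's proof, however, restarts from zero, uses $W_1[Z(J^k,0)]\le W_k$ to get $q\ge p$, and argues $q>p$. That is what the bound on $\mathbb P\{L_n\le K_n\}$ needs, via $\mathbb E(A_n-V_n)\ge\frac{q-p}{4}n$. With $q<p$ the lemma is true but useless there. Strictness also needs an extra argument: if $B$ is supported in $[10,\infty)$, then $\{W_1[Z(J,0)]>1\}=\{J_1>2\}$, so $q=p$.
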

\begin{proof}
Recall the processes
$t\mapsto Z_t(x,J)$
and
$t\mapsto W_t[Z(J,x)]$
defined in
\eqref{eq: Z}
and
\eqref{eq: W}.
Since
\[
Z_t(x,J)=x+J_t-t,
\]
the mapping
$x\mapsto Z_t(x,J)$
is increasing for every fixed $t\ge0$.
Moreover, the Skorokhod reflection map is monotone with respect to the pointwise order on
$\mathcal D([0,\infty))$.
Consequently,
\[
x\longmapsto W_t[Z(J,x)]
\]
is also increasing.

We now construct an auxiliary workload process by restarting the queue from zero after every observation epoch. Define
\[
W_k^*
=
W_1[Z(J^k,0)],
\qquad
k\ge1,
\]
with $W_0^*=0$.
Since $W_{k-1}\ge0$, monotonicity yields
\[
W_k^*\le W_k,
\qquad
k\ge1.
\]
Hence,
\[
I_k^*
\equiv
\mathbf1_{\{W_k^*>1\}}
\le
I_k,
\qquad
k\ge1,
\]
almost surely.

Observe that $W_k^*$ depends only on $J^k$.
Therefore,
$(I_k^*)_{k\ge1}$
is an i.i.d. sequence.
Furthermore,
\begin{align*}
q
&\equiv
\mathbb P(I_k^*=1)
\\
&=
\mathbb P\!\left(
W_1[Z(J^1,0)]>1
\right)
\\
&>
\mathbb P\!\left(
Z_1(J^1,0)>1
\right)
\\
&=
\mathbb P(J_1^1>2)
\equiv
p.
\end{align*}
In particular, the strict inequality holds since when starting at zero, the reflection remains active until the first jump.
Thus,
\[
I_k^*\sim\mathrm{Ber}(q).
\]

Define
\[
A_n
=
\sum_{k=m_n+1}^{n}I_k^*.
\]
Since $I_k^*\le I_k$,
\[
A_n\le L_n
\]
almost surely, while
\[
A_n\sim\mathrm{Bin}(n-m_n,q),
\]
proving (i).

To establish (ii), observe that
\[
I_k\mathbf1_{\{X_k>2\}}
=
I_k\mathbf1_{\{J^{k+1}_1>2\}}
\le
\mathbf1_{\{J^{k+1}_1>2\}}.
\]
Hence,
\[
V_n\equiv\sum_{k=0}^{m_n}
\mathbf1_{\{J^{k+1}_1>2\}}
\]
satisfies
\[
K_n\le V_n
\]
almost surely.
Since the indicators are i.i.d. Ber$(p)$,
\[
V_n\sim\mathrm{Bin}(m_n+1,p),
\]
which proves (ii).

For (iii), note that
\[
I_k
\mathbf1_{\{X_k>2\}}
=
I_k
\mathbf1_{\{J^{k+1}_1>2\}}
\ge
I_k^*
\mathbf1_{\{J^{k+1}_1>2\}}.
\]
Moreover,
\[
I_k^*
\geq
\mathbf1_{\{J^{k+1}_1>2\}}
,
\]
since
$J^{k+1}_1>2$
implies
$W_k^*>1$.
Therefore,
\[
I_k
\mathbf1_{\{X_k>2\}}
\ge
\mathbf1_{\{J^{k+1}_1>2\}}
\mathbf1_{\{J^{k+1}_1>2\}}.
\]
Define
\[
K_n^*\equiv
\sum_{k=0}^{\overline m_n}
\mathbf1_{\{J^{2k+1}_1>2\}}
\mathbf1_{\{J^{2k+2}_1>2\}}.
\]
The summands involve disjoint pairs of independent processes and are therefore i.i.d. Ber$(p^2)$.
Furthermore,
\[
K_n^*
\le
K_n
\]
almost surely.
Hence,
\[
K_n^*
\sim
\mathrm{Bin}(\overline m_n+1,p^2),
\]
establishing (iii).

Finally,
$A_n$
depends only on
\[
J^{m_n+1},
J^{m_n+2},
\ldots,
\]
whereas
$(K_n^*,V_n)$
depends only on
\[
J^1,\ldots,J^{m_n}.
\]
Since these two collections of driving processes are independent, so are
$A_n$
and
$(K_n^*,V_n)$,
which proves (iv).
\end{proof}

\subsection{An embedded conditionally i.i.d. sample}
Although the workload observations are inherently dependent, the screening procedure introduced in the previous subsection reveals an embedded sequence with a remarkably simple probabilistic structure. Indeed, by retaining only those sampling intervals during which reflection is inactive, the corresponding workload increments become independent realizations of the underlying compound Poisson input process. The next lemma makes this observation precise and forms the cornerstone of the proposed estimation procedure.
\begin{lemma}\label{lemma: embedded iid}
Conditionally on $\mathcal H_{m_n+1}$, the sequence
\[
(X_{\tau_k})_{k\ge1}
\]
is i.i.d. with common distribution $\mathrm{CP}_{\lambda,B}$.
\end{lemma}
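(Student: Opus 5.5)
The plan is to apply the strong Markov property of the workload chain at the stopping times $\tau_i$, using the recursion \eqref{eq: recursion}. Let $\mathcal G_k\equiv\sigma(J^1,\ldots,J^k)$, so that $W_k$ is $\mathcal G_k$-measurable, and let $J^{k+1}$ be independent of $\mathcal G_k$. The first step is that each $\tau_i$ is a stopping time with respect to $(\mathcal G_k)_{k\ge0}$, because $\{\tau_i=k\}$ is determined by $I_{m_n+1},\ldots,I_k$, which are functions of $W_{m_n+1},\ldots,W_k$. Also, on $\{I_k=1\}$ we have $W_k>1$, so the path $W_k+J^{k+1}_t-t$ stays positive on $[0,1]$. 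Hence reflection is inactive there and
\[
X_k=J^{k+1}_1 .
\]

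The second step, which I take to be the main obstacle, is to show that $\tau_i<\infty$ almost surely for every $i$. This is needed for the sequence to be defined at all, and it must hold even when the queue is stable, where $W$ could in principle linger near zero. I would use the coupling of Lemma~\ref{lem: coupling}: $I_k\ge I_k^*$, and the $I_k^*$, $k\ge m_n+1$, are i.i.d.\ $\mathrm{Ber}(q)$ with $q>0$. By Borel--Cantelli, infinitely many $I_k^*$ equal $1$ almost surely, so infinitely many $I_k$ do as well.

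The third step identifies the law. I fix $r\ge1$, bounded measurable functions $f_1,\ldots,f_r$, and a set $H\in\mathcal H_{m_n+1}$. The goal is
\[
\mathbb E\Big[\mathbf 1_H\prod_{i=1}^r f_i(X_{\tau_i})\Big]=\mathbb P(H)\prod_{i=1}^r \mathbb E f_i(J_1),
\]
and I would prove it by induction on $r$. Since $\tau_r\ge m_n+1$, the set $H$ lies in $\mathcal G_{m_n+1}\subseteq\mathcal G_{\tau_r}$. The variables $X_{\tau_1},\ldots,X_{\tau_{r-1}}$ equal $J^{\tau_i+1}_1$ with $\tau_i+1\le\tau_r$, so they are $\mathcal G_{\tau_r}$-measurable. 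I decompose on $\{\tau_r=k\}$ for $k\ge m_n+1$. The event $\{\tau_r=k\}$ together with all the other factors lies in $\mathcal G_k$, while $X_{\tau_r}=J^{k+1}_1$ is independent of $\mathcal G_k$ with law $\mathrm{CP}_{\lambda,B}$. Therefore
\[
\mathbb E\Big[\mathbf 1_H\prod_{i<r}f_i(X_{\tau_i})\,\mathbf 1_{\{\tau_r=k\}}f_r(J^{k+1}_1)\Big]
=\mathbb E f_r(J_1)\,\mathbb E\Big[\mathbf 1_H\prod_{i<r}f_i(X_{\tau_i})\mathbf 1_{\{\tau_r=k\}}\Big].
\]
Summing over $k$ and using $\tau_r<\infty$ a.s.\ reduces the identity to the case $r-1$, which closes the induction.

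Taking $r$ arbitrary shows that, conditionally on $\mathcal H_{m_n+1}$, the finite-dimensional distributions of $(X_{\tau_k})_{k\ge1}$ are products of $\mathrm{CP}_{\lambda,B}$. By Kolmogorov consistency, this gives the claimed conditional i.i.d.\ structure.
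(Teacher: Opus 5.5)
Your proof is correct and follows essentially the same route as the paper: condition on the past at $\tau_r$, use that $W_{\tau_r}>1$ makes $X_{\tau_r}$ a fresh compound Poisson increment, peel off the last factor via the tower property, and iterate. The only differences are that you work with the driving filtration $\sigma(J^1,\ldots,J^k)$ and decompose on $\{\tau_r=k\}$ by hand, where the paper invokes the strong Markov property of $(W_k)$ directly. You also explicitly justify $\tau_i<\infty$ almost surely via $I_k\ge I_k^*$, $q>0$ and Borel--Cantelli, which the paper only asserts.
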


\begin{proof}
Denote 
\[
\mathcal H_n\equiv
\sigma(\mathcal{D}_n),
\qquad n\ge0.
\]

Suppose that the workload at the beginning of a sampling interval satisfies
${W_0>1}$.
Since the server removes one unit of work over an interval of length one, the workload remains strictly positive throughout $(0,1)$.
Consequently, the reflection term is inactive over this interval and
\begin{equation}\label{eq: CP proof}
W_1-W_0+1
=
J_1
\sim
\mathrm{CP}_{\lambda,B}.
\end{equation}

Now recall from \eqref{eq: recursion} that
$(W_k)_{k\ge0}$
is a time-homogeneous Markov chain.
Since each $\tau_k$ is a stopping time with respect to the filtration
$(\mathcal H_n)_{n\ge0}$,
the strong Markov property yields, for every Borel set
$\mathcal A\subset[0,\infty)$,
\begin{align}
\mathbb P_x
\!\left(
X_{\tau_k}\in \mathcal A
\,\middle|\,
\mathcal H_{\tau_k}
\right)
&=
\mathbb P_{W_{\tau_k}}
\!\left(
W_1-W_0+1\in \mathcal A
\right)
\nonumber\\
&=
\mathrm{CP}_{\lambda,B}(\mathcal A),
\label{eq: cpp-markov}
\end{align}
where the last equality follows from \eqref{eq: CP proof}, since
$W_{\tau_k}>1$
by definition of $\tau_k$.

It remains to prove conditional independence.
Let
$\mathcal A_1,\ldots,\mathcal A_r$
be Borel subsets of $[0,\infty)$.
Using \eqref{eq: cpp-markov}, the tower property argument, we obtain
\begin{align*}
&
\mathbb P_x
\left(
X_{\tau_i}\in\mathcal  A_i,\;
1\le i\le r
\,\middle|\,
\mathcal H_{m_n+1}
\right)
\\
&=
\mathbb E_x
\left[
\prod_{i=1}^{r-1}
\mathbf1_{\{X_{\tau_i}\in \mathcal A_i\}}
\,
\mathbb P_x
\left(
X_{\tau_r}\in \mathcal A_r
\,\middle|\,
\mathcal H_{\tau_r}
\right)
\middle|
\mathcal H_{m_n+1}
\right]
\\
&=
\mathrm{CP}_{\lambda,B}(\mathcal A_r)\,
\mathbb P_x
\left(
X_{\tau_i}\in \mathcal A_i,\;
1\le i\le r-1
\,\middle|\,
\mathcal H_{m_n+1}
\right).
\end{align*}
Iterating this identity yields
\[
\mathbb P_x
\left(
X_{\tau_i}\in \mathcal A_i,\;
1\le i\le r
\,\middle|\,
\mathcal H_{m_n+1}
\right)
=
\prod_{i=1}^{r}
\mathrm{CP}_{\lambda,B}(\mathcal A_i),
\]
which proves that
$(X_{\tau_k})_{k\ge1}$
is conditionally i.i.d. with common distribution
$\mathrm{CP}_{\lambda,B}$.
\end{proof}

\subsection{Risk bound}
 Since both $B(w)$ is a value in $[0,1]$, the $L^1$-risk of the estimator admits the decomposition
\begin{equation}\label{eq: risk decomposition}
\mathbb E|B_n(w)-B(w)|
\le
\mathbb P\{L_n\le K_n\}
+
\mathbb P\{K_n<4\}
+
\mathbb E|\overline B_n(w)-B(w)|\textbf{1}_{\{K_n\geq4\}}.
\end{equation}
The remainder of the proof consists of bounding each of the three terms on the right-hand side of \eqref{eq: risk decomposition}. 

\subsubsection{Bounding $\mathbb P\{L_n\le K_n\}$ and
$\mathbb P\{K_n<4\}$}

We first control the two probability terms in \eqref{eq: risk decomposition}. The bounds rely on the coupling established in Lemma~\ref{lem: coupling}, together with Hoeffding's inequality.  

Namely, due to Lemma~\ref{lem: coupling},
\[
A_n\sim\mathrm{Bin}(n-m_n,q),
\qquad
V_n\sim\mathrm{Bin}(m_n+1,p).
\]
Thus, we obtain
\[
\mathbb E(A_n-V_n)
=
(n-m_n)q-(m_n+1)p.
\]
Moreover,
\[
n-m_n
=
\left\lceil\frac{n+1}{2}\right\rceil,
\]
so that
\[
\mathbb E(A_n-V_n)
=
\frac{q-p}{2}\,n+\mathcal O(1).
\]
Consequently, for sufficiently large $n$,
\[
\mathbb E(A_n-V_n)
\ge
\frac{q-p}{4}\,n,
\]
Now, since $A_n$ and $V_n$ are independent, we write
\[
A_n-V_n
=
\sum_{i=1}^{n-m_n}\xi_i
-
\sum_{j=1}^{m_n+1}\eta_j,
\]
where
\[
\xi_i\stackrel{\mathrm{i.i.d.}}{\sim}\mathrm{Ber}(q),
\qquad
\eta_j\stackrel{\mathrm{i.i.d.}}{\sim}\mathrm{Ber}(p),
\]
and all variables are mutually independent.

Since every summand belongs to $[-1,1]$, Hoeffding's inequality and Lemma~\ref{lem: coupling} jointly provide
\begin{align*}
\mathbb P\{L_n\le K_n\}
&
\le
\mathbb P\{A_n\le V_n\}
\\
&
=
\mathbb P\!\left\{
A_n-V_n-\mathbb E(A_n-V_n)
\le
-\mathbb E(A_n-V_n)
\right\}
\\
&
\le
\exp\!\left[
-\frac{\mathbb E^2(A_n-V_n)}
{2(n-m_n+m_n+1)}
\right]
\\
&
\le
\exp\!\left[
-\frac{(q-p)^2}{32}\,n
\right],
\end{align*}
for all sufficiently large $n$.

Similarly, Lemma~\ref{lem: coupling} gives
\[
\mathbb P\{K_n<4\}
\le
\mathbb P\{K_n^*<4\},
\]
where
\[
K_n^*
\sim
\mathrm{Bin}(\overline m_n+1,p^2).
\]
Since
\[
\overline m_n+1
=
\left\lfloor\frac{m_n}{2}\right\rfloor
=
\frac{n}{4}+\mathcal O(1),
\]
we have
\[
\mathbb EK_n^*
=
(\overline m_n+1)p^2
=
\frac{p^2}{4}\,n+\mathcal O(1).
\]
Hence for any sufficiently large $n$ we have
\[
\mathbb EK_n^*
\ge
\frac{p^2}{5}\,n.
\]

Applying Hoeffding's inequality once more,
\begin{align*}
\mathbb P\{K_n<4\}
&
\le
\mathbb P\{K_n^*<4\}
\\
&
=
\mathbb P\!\left\{
K_n^*-\mathbb EK_n^*
<
4-\mathbb EK_n^*
\right\}
\\
&
\le
\exp\!\left(
-\frac{\mathbb E^2(K_n^*-4)}
{2(\overline m_n+1)}
\right)
\\
&
\le
\exp\!\left(
-\frac{2p^4}{25}n\,
\right),
\end{align*}
for all sufficiently large $n$.

\subsubsection{Bounding $\mathbb E
|\overline B_n(w)-B(w)|
\mathbf1_{\{K_n\ge4\}}
$}

It remains to bound the third term in the risk decomposition
\eqref{eq: risk decomposition}.
For notational convenience, define
\[
g(x)\equiv
\frac{\log(x+1)}{\sqrt{x}},
\qquad
x>0.
\]
A straightforward calculation shows that
$g$
is decreasing on
$[4,\infty)$.

Recall that
$\overline B_n$
is precisely the estimator introduced by
Den Boer and Mandjes~\cite{Den Boer2017},
applied to the screened sample
\eqref{eq: subsequence}.
Furthermore,
Lemma~\ref{lemma: embedded iid}
shows that, conditionally on
$\mathcal H_{m_n+1}$,
the random variables
\[
X_{\tau_1},X_{\tau_2},
\ldots
\]
are i.i.d. with common distribution
$\mathrm{CP}_{\lambda,B}$. In addition, $K_n$ is measurable with respect to $\mathcal{H}_{m_n+1}$. Therefore, conditionally on
$\mathcal H_{m_n+1}$,
the observations in the screened sample \eqref{eq: subsequence} are i.i.d. with common distribution
$\mathrm{CP}_{\lambda,B}$. 
Since the assumptions imposed on
$B$
imply those of
Theorem~\ref{thm: Den Boer},
we may apply the latter conditionally on
$\mathcal H_{m_n+1}$.

Consequently,
\begin{align}
&
\mathbb E
|\overline B_n(w)-B(w)|
\mathbf1_{\{K_n\ge4\}}
\nonumber
\\
&
=
\mathbb E\mathbf1_{\{K_n\ge4\}}
\,
\mathbb E\left[
|\overline B_n(w)-B(w)|
\,\middle|\,
\mathcal H_{m_n+1}
\right]
\nonumber
\\
&
\le
C
\,
\mathbb E
\mathbf1_{\{K_n\ge4\}}
g(K_n).
\label{eq: second term 1}
\end{align}

Since
$K_n^*\le K_n$
almost surely and
$g$
is nonnegative on $[0,\infty)$ and decreasing on
$[4,\infty)$,
we have
\[
\mathbf1_{\{K_n\ge4\}}
g(K_n)
\le g(K_n\vee4)\le
g(K_n^*\vee4).
\]
Therefore,
\[
\mathbb E
|\overline B_n(w)-B(w)|
\mathbf1_{\{K_n\ge4\}}
\le
C
\,
\mathbb Eg(Z_n),
\]
where
\[
Z_n\equiv K_n^*\vee4.
\]

Hence, it only remains to estimate
$\mathbb Eg(Z_n)$.
The next lemma provides the required moment estimates.

\begin{lemma}\label{lemma: central moments asymptotics}
For every integer
$r\ge1$,
define
\[
\mu_n^r=
\begin{cases}
\mathbb EZ_n,
&
r=1,
\\
\mathbb E|Z_n-\mu_n^1|^r,
&
r\ge2.
\end{cases}
\]
Then,
\[
0
\le
\mu_n^1
-
(\overline m_n+1)p^2
\le
4,
\]
and
\[
\mu_n^1
\sim
\frac{p^2}{4}n,
\qquad
\mu_n^r
=
\mathcal O(n^{r/2}),
\qquad
r\ge2.
\]
\end{lemma}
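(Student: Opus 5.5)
We use only that $Z_n=K_n^*\vee 4$ with $K_n^*\sim\mathrm{Bin}(N,p^2)$, where $N\equiv\overline m_n+1$; this comes from Lemma~\ref{lem: coupling}(iii). Write $Z_n=K_n^*+(4-K_n^*)^+$. Since $0\le(4-K_n^*)^+\le 4$, we get $K_n^*\le Z_n\le K_n^*+4$ pointwise. Taking expectations gives
\[
0\le \mu_n^1-(\overline m_n+1)p^2=\mathbb E(4-K_n^*)^+\le 4,
\]
which is the first claim. Moreover, $\overline m_n=\lfloor m_n/2\rfloor$ with $m_n=\lfloor (n-1)/2\rfloor$, so $\overline m_n+1=n/4+\mathcal O(1)$. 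Hence $\mu_n^1=\frac{p^2}{4}n+\mathcal O(1)$, and in particular $\mu_n^1\sim \frac{p^2}{4}n$.

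For the central moments with $r\ge2$, we decompose
\[
Z_n-\mu_n^1=\big(K_n^*-Np^2\big)+\big((4-K_n^*)^+-\mathbb E(4-K_n^*)^+\big).
\]
The second bracket lies in $[-4,4]$. By Minkowski's inequality in $L^r$,
\[
(\mu_n^r)^{1/r}\le \big(\mathbb E|K_n^*-Np^2|^r\big)^{1/r}+4.
\]
It therefore suffices to show that $\mathbb E|K_n^*-Np^2|^r=\mathcal O(N^{r/2})$, because $N=\Theta(n)$ and the additive constant $4$ is then absorbed into $\mathcal O(n^{1/2})$.

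The binomial moment bound is the main step, and it is standard. Write $K_n^*-Np^2=\sum_{i=1}^N(\zeta_i-p^2)$, where the $\zeta_i$ are i.i.d. $\mathrm{Ber}(p^2)$ and the centred summands are bounded by $1$. Hoeffding's inequality gives $\mathbb P(|K_n^*-Np^2|>t)\le 2e^{-2t^2/N}$. Integrating the tail yields
\[
\mathbb E|K_n^*-Np^2|^r=\int_0^\infty rt^{r-1}\,\mathbb P(|K_n^*-Np^2|>t)\,dt\le 2r\int_0^\infty t^{r-1}e^{-2t^2/N}\,dt=C_r N^{r/2},
\]
after the substitution $t=s\sqrt N$. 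Alternatively, one could invoke the Marcinkiewicz--Zygmund or Rosenthal inequality. Combining this with the Minkowski step gives $\mu_n^r=\mathcal O(n^{r/2})$. No real obstacle is expected here.
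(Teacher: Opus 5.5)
Your proof is correct and follows essentially the paper's route. The sandwich $K_n^*\le Z_n\le K_n^*+4$ gives the first-moment claims, and the central moments are reduced to those of the centred binomial $K_n^*-(\overline m_n+1)p^2$ plus a bounded error. The differences are only in execution: you use Minkowski's inequality (with the sharper constant $4$) where the paper uses the pointwise bound $|Z_n-\mu_n^1|\le |K_n^*-(\overline m_n+1)p^2|+8$ together with $(a+b)^r\le 2^r(a^r+b^r)$, and you get $\mathbb E|K_n^*-(\overline m_n+1)p^2|^r=\mathcal O(n^{r/2})$ by integrating the Hoeffding tail rather than by citing Rosenthal's inequality.
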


\begin{proof}
Recall that
\[
Z_n
=
K_n^*\vee4,
\]
where
\[
K_n^*
\sim
\mathrm{Bin}(\overline m_n+1,p^2).
\]
Since
\[
K_n^*
\le
Z_n
\le
K_n^*+4,
\]
we immediately obtain
\[
(\overline m_n+1)p^2
\le
\mu_n^1
\le
(\overline m_n+1)p^2+4.
\]
Since
\[
\overline m_n+1
=
\frac n4+\mathcal O(1),
\]
it follows that
\[
\mu_n^1
\sim
\frac{p^2}{4}n.
\]

Next, write
\[
K_n^*
=
\sum_{k=0}^{\overline m_n}\xi_k,
\]
where
$\xi_0,\xi_1,\ldots$
are i.i.d.
Bernoulli random variables with parameter
$p^2$.
Define
\[
S_n\equiv
\sum_{k=0}^{\overline m_n}
(\xi_k-p^2).
\]
Then
\[
K_n^*
=
S_n
+
(\overline m_n+1)p^2.
\]
Since
\[
\mu_n^1
\in
\left[
(\overline m_n+1)p^2,
(\overline m_n+1)p^2+4
\right],
\]
we obtain
\[
|Z_n-\mu_n^1|
\le
|S_n|+8.
\]
Hence, for every
$r\ge2$,
\[
|Z_n-\mu_n^1|^r
\le
2^r
\left(
|S_n|^r+8^r
\right),
\]
and therefore
\[
\mu_n^r
\le
2^r
\left(
\mathbb E|S_n|^r
+
8^r
\right).
\]
Finally,
since the variables
$\xi_k-p^2$
are centered,
independent,
and uniformly bounded,
Rosenthal's inequality yields
\[
\mathbb E|S_n|^r
=
\mathcal O(n^{r/2}),
\qquad
n\to\infty.
\]
Consequently,
\[
\mu_n^r
=
\mathcal O(n^{r/2}),
\]
which completes the proof.
\end{proof}
$\text{ }$\newline\newline
We now complete the proof of Theorem~\ref{thm: main} by estimating
$\mathbb Eg(Z_n)$ using second order delta method argument.
Since
$g\in C^2((0,\infty))$,
Taylor's theorem (see, \cite[Theorem 1]{Yang2026}) yields
\begin{equation}\label{eq:Taylor}
g(Z_n)
=
g(\mu_n^1)
+
g'(\mu_n^1)(Z_n-\mu_n^1)
+
R_n,
\end{equation}
where
\[
R_n
=
\frac12
g''(\eta_n)
(Z_n-\mu_n^1)^2,
\]
for some random variable
$\eta_n$
lying between
$Z_n$
and
$\mu_n^1$.

Since
$\mu_n^1=\mathbb EZ_n$,
\begin{equation}\label{eq:mean-term}
\mathbb E(Z_n-\mu_n^1)=0.
\end{equation}

It therefore remains to estimate
$\mathbb E|R_n|$.

Since
\[
g''(x)
=
\mathcal O
\!\left(
\frac{\log x}{x^{5/2}}
\right),
\qquad
x\to\infty,
\]
for any $x_0>0$ there is a constant
$c_1>0$
such that
\[
|g''(x)|
\le
c_1
\frac{\log x}{x^{5/2}},
\qquad
x\ge x_0.
\]

Choose
$\delta,\varepsilon>0$
such that
\[
\frac{p^2}{4}-\delta-\varepsilon>0.
\]
Since
\[
\mu_n^1
\sim
\frac{p^2}{4}n,
\]
there exists
$n_0$
such that
\[
\left(
\frac{p^2}{4}-\delta
\right)n
<
\mu_n^1
<
\left(
\frac{p^2}{4}+\delta
\right)n,
\qquad
n\ge n_0.
\]

Define
\[
C_n
=
\left\{
|\eta_n-\mu_n^1|
\le
\varepsilon n
\right\}.
\]

Thus, for any sufficiently large $n$, on the event
$C_n$,
we have
\[
\eta_n
\ge
\left(
\frac{p^2}{4}
-
\delta
-
\varepsilon
\right)n,
\]
and therefore
\[
|g''(\eta_n)|
\le
c_1
\frac{\log n}{n^{5/2}}.
\]
Consequently, for any sufficiently large $n$,
\[
|R_n|
\mathbf1_{C_n}
\le
c_1
\frac{\log n}{n^{5/2}}
(Z_n-\mu_n^1)^2,
\]
As a result, by taking expectations and applying
Lemma~\ref{lemma: central moments asymptotics}
gives
\begin{equation}\label{eq:R-A}
\mathbb E
|R_n|
\mathbf1_{C_n}
=
\mathcal O
\!\left(
\frac{\log n}{n^{3/2}}
\right).
\end{equation}

Next, observe that
$\eta_n$
lies between
$Z_n$
and
$\mu_n^1$.
Hence, for any $n$ we have
\[
C_n^c
\subseteq
\left\{
|Z_n-\mu_n^1|
>
\varepsilon n
\right\}.
\]
Since
\[
K_n^*
\le
Z_n
\le
K_n^*+4,
\]
we further obtain
\[
C_n^c
\subseteq
\left\{
|K_n^*-\mu_n^1|
>
\varepsilon n-4
\right\},
\]
for any sufficiently large $n$.
Moreover,
Lemma~\ref{lemma: central moments asymptotics}
shows that,
\[
|\mu_n^1-(\overline m_n+1)p^2|
\le4,
\]
which implies also
\[
C_n^c
\subseteq
\left\{
|K_n^*-(\overline m_n+1)p^2|
>
\varepsilon n-8
\right\}.
\]
For all sufficiently large
$n$,
\[
\varepsilon n-8
\ge
\frac{\varepsilon n}{2},
\]
and therefore
\[
C_n^c
\subseteq
\left\{
|K_n^*-(\overline m_n+1)p^2|
>
\frac{\varepsilon n}{2}
\right\}.
\]
Applying Hoeffding's inequality to the
binomial random variable $K_n^*$ yields
\begin{align*}
    \mathbb P
\!\left(
|K_n^*-(\overline m_n+1)p^2|
>
\frac{\varepsilon n}{2}
\right)
&\le
2
\exp
\!\left(
-
\frac{(\varepsilon n/2)^2}
{2(\overline m_n+1)}
\right)
\\[4pt]&=
\exp
\!\left(
-
\frac{\varepsilon^2n}
{2}
\right)
.
\end{align*}
Therefore, for any sufficiently large $n$, we derive an upper bound
\begin{equation}\label{eq:Hoeffding-An}
\mathbb P(C_n^c)
\le
2
\exp
\!\left(
-
\frac{\varepsilon^2n}
{2}
\right)
\end{equation}

On the other hand,
continuity of
$g''$
on
$[4,\infty)$
implies that
\[
|g''(x)|
\le
c_2,
\qquad
x\ge4,
\]
for some constant
$c_2>0$.
Hence, for any sufficiently large $n$, 
\[
|R_n|
\le
c_2
(Z_n-\mu_n^1)^2.
\]
As a result, by applying the Cauchy--Schwarz inequality, deduce that
\begin{align}
\mathbb E
|R_n|
\mathbf1_{C_n^c}
&\le
c_2^2
\sqrt{
\mathbb P(C_n^c)
\,
\mathbb E
(Z_n-\mu_n^1)^4
}
\nonumber\\
&=
\mathcal O
\!\left(
n
e^{-\frac{\varepsilon^2n}{2}}
\right),
\label{eq:R-Ac}
\end{align}
where the last estimate follows from
Lemma~\ref{lemma: central moments asymptotics}.

Combining
\eqref{eq:R-A}
and
\eqref{eq:R-Ac},
we conclude that
\[
\mathbb E|R_n|
=
\mathcal O
\!\left(
\frac{\log n}{n^{3/2}}
\right).
\]

Taking expectations in
\eqref{eq:Taylor},
using
\eqref{eq:mean-term},
we obtain
\[
\mathbb Eg(Z_n)
=
g(\mu_n^1)
+
\mathcal O
\!\left(
\frac{\log n}{n^{3/2}}
\right).
\]
Finally,
since
\[
\mu_n^1
\sim
\frac{p^2}{4}n,
\]
we have
\[
g(\mu_n^1)
=
\frac{\log(\mu_n^1+1)}
{\sqrt{\mu_n^1}}
=
\mathcal O
\!\left(
\frac{\log n}{\sqrt n}
\right).
\]
Therefore,
\[
\mathbb Eg(Z_n)
=
\mathcal O
\!\left(
\frac{\log n}{\sqrt n}
\right),
\]
which completes the proof of
Theorem~\ref{thm: main}. $\blacksquare$

\section{Discussion}\label{sec: discussion} This paper resolves the nonparametric estimation problem posed by Hansen and Pitts \cite{Hansen2006} for the M/G/1 workload model without assuming stationarity, stability, or knowledge of the arrival rate~$\lambda$. In doing so, it demonstrates that accurate nonparametric inference from workload observations is possible without relying on the classical assumptions that have traditionally guided statistical analysis of queueing systems. We hope that this work will contribute to a broader shift in perspective, encouraging the development of inference procedures that remain valid beyond the stationary and stable setting.


In addition, we believe that a major contribution of this paper is methodological rather than algorithmic. Instead of attacking the dependent workload observations directly, we introduced a two-stage screening procedure that performs a probabilistic reduction of the original inference problem. Conditionally on the first-stage history, the screened observations become independent compound Poisson random variables, thereby transforming the workload estimation problem into a classical decompounding problem. The resulting estimator is therefore obtained by combining this probabilistic reduction with an existing decompounding methodology. 

This separation between the probabilistic and statistical components of the construction has an important conceptual consequence. The screening and coupling arguments are essentially independent of the particular decompounding procedure employed in the second stage. Consequently, the framework developed here is inherently modular: any future improvement in the underlying decompounding methodology can, in principle, be incorporated into the present construction and immediately translated into an improved estimator for the workload model using the same proof guidelines. In particular, the logarithmic factor in Theorem~\ref{thm: main} originates entirely from the currently available decompounding estimator rather than from the screening procedure itself. 

The sample-splitting construction also gives rise to several interesting questions.
Its main role in the present framework is to decouple the random selection of the
effective sample size from the observations subsequently used for estimation,
thereby ensuring the conditional independence structure required by our analysis.
An important open question is whether such a separation is an intrinsic feature of
the problem or merely an artifact of the proof strategy employed here. More
specifically, it remains to be understood whether a genuinely one-stage estimator
can attain the same convergence rate, or perhaps even improve upon it, under the
same level of generality as the assumptions imposed in this paper. To the best of our knowledge, this is the first use of a sample-splitting construction in the context of statistical inference for queueing systems. More broadly, sample splitting has received considerable attention as a methodological tool in statistics. It is typically viewed as a flexible device for separating different components of an inference procedure, although such a separation may come at the cost of reduced statistical efficiency; see, for example, Cox \cite{Cox1975}, Moran \cite{Moran1973}, Jacobovic \cite{Jacobovic2022}, and Goeman and Solari \cite{Goeman2024}, as well as the references therein.

Another natural question concerns the optimal choice of the splitting proportion. Although we adopted the symmetric choice of $m_n$ for simplicity, other choices may improve finite-sample performance. Our analysis suggests that such refinements would affect mainly the constants rather than the convergence rate.

Finally, we believe that the probabilistic reduction developed here extends well beyond the specific estimation problem studied in this paper. It would be particularly interesting to investigate whether similar screening mechanisms can be constructed for broader classes of reflected stochastic processes, including Lévy-driven storage models, reflected diffusion processes, and queueing networks. More generally, we hope that the present work illustrates a useful paradigm for statistical inference from dependent stochastic processes: rather than estimating directly from highly dependent observations, one may first identify a probabilistic transformation that reveals an embedded conditionally independent structure and then exploit the the rich statistical methodology available for independent observations.

\medskip
\noindent\textbf{Acknowledgment.}
The authors sincerely thank Onno Boxma and Liron Ravner for their careful reading of earlier versions of this manuscript and for their helpful comments and suggestions.

\end{document}